\DeclareMathAlphabet{\mathpzc}{OT1}{pzc}{m}{it}
\newtheorem{thm}{Theorem}[section]
\newtheorem{lem}[thm]{Lemma}
\newtheorem{cor}[thm]{Corollary}
\newtheorem{rem}[thm]{Remark}
\newtheorem{ex}[thm]{Example}
\newtheorem{definition}{Definition}[section]
\newcommand{\m}{\mathpzc{m}}
\newcommand{\p}{\mathpzc{p}}
\newcommand{\bZ}{\mathbb Z}
\newcommand{\Spec}{\operatorname{Spec}}\newcommand{\hgt}{\operatorname{ht}}
\newcommand{\td}{\operatorname{tr.deg}}
\newcommand{\LND}{\operatorname{LND}}
\title{Some results on homogeneous locally nilpotent $R$-derivations on $R[X,Y,Z]$}
\author{Parnashree Ghosh \\
	{\small{\it  Theoretical Statistics and Mathematics  Unit, Indian Statistical Institute,}}\\ 
	{\small{\it 203 B.T.Road, Kolkata-700108, India}}\\
	{\small{\it e-mail : ghoshparnashree@gmail.com, parnashree$\_$r@isical.ac.in}}\\
}
\begin{document}
\date{}
\maketitle

\abstract{
	\noindent
	Let $k$ be a field of characteristic zero and $R$ a $k$-algebra. In this note we study homogeneous $R$-lnds $D$ on $R[X,Y,Z]$ with respect to the standard weights $(1,1,1)$. We show that when $R$ is a PID, $rank(D)$ can be at most $2$ if $\deg(D) \leqslant 3$. As a consequence we obtain a certain class of homogeneous lnds on $k^{[4]}$ whose kernels are $k^{[3]}$. Further when $R$ is a Dedekind domain, we 
	show that $\ker(D)$ is generated by at most $3$ elements
	 if $\deg(D) \leqslant 3$. 
	}

\smallskip

\noindent
{\small {{\bf Keywords}. Polynomial Rings, Homogeneous Locally Nilpotent Derivation.}

\noindent
{\small {{\bf 2020 MSC}. Primary: 13N15, 13F20; Secondary: 14R20, 13A50.}}
}

\section{Introduction}

Throughout this note, $k$ will denote a field of characteristic zero.
For an integral domain $B$ and a non-zero element $b \in B$, the notation $B_b$ will denote the localisation of $B$ with respect to the multiplicatively closed subset $\{b^n \mid n \in \mathbb{Z}_{\ge 0}\}$ of $B$.
Let $S \subset R$ be an integral domains containing $k$. 
Then $\LND(R)$ will denote the set of all locally nilpotent derivation(s) (lnd(s)) on $R$ and $\LND_S(R)$ will denote the set of all locally nilpotent $S$-derivations $D$ ($S$-lnd, i.e., $D|_S=0$)  on $R$.  
By $R^{[n]}$ we denote a polynomial ring in $n\, (\geqslant 1)$ variables over $R$ and $R^{*}$ denotes the multiplicative group of units in $R$.
 For a $\mathbb{Z}$-graded integral domain $B$ and a homogeneous lnd $D$ on $B$, $\deg(D)$ will denote its degree with respect to the grading on $B$ (see \thref{def}(v)). 
 Unless specified, by homogeneous $R$-lnd $D$ on $R[X,Y,Z]$, we mean $D$ is homogeneous with respect to the weights $(1,1,1)$.
For an lnd $D \in \LND_R(R^{[n]})$, the rank of $D$, denoted by $rank(D)$, is defined as the smallest positive integer $r$ such that there exists a coordinate system $\{V_{1}, \cdots, V_{n}\}$ in $R^{[n]}$  for which $DV_i \neq 0$ for $i \leqslant r$, and $DV_i=0$ for $i>r$. 

To understand an lnd $D$ on an integral domain $B$, it is very important to know its kernel ($\ker(D)$), and when $B=R^{[n]}$ and $D \in \LND_R(B)$, then $rank(D)$ plays a crucial role to determine the structure of $\ker(D)$.  
%
%
%

\smallskip
We now state a result of Freudenburg on the rank of a homogeneous lnd on $k^{[3]}$ (\cite[Corollary 2]{GF}).
\begin{thm}\thlabel{thm1}
	Let $k$ be an algebraically closed field of characteristic zero, and $D$ be a homogeneous locally nilpotent derivation on $k[X,Y,Z] (=k^{[3]})$. If $\deg(D) \leqslant 3$, then $rank(D) \leqslant 2$.
\end{thm}
\noindent
In section 2 of this note we show that \thref{thm1} can be extended to any field of characteristic zero (\thref{thma}).
In \cite{GF}, a crucial step to prove \thref{thm1} is the following result (\cite[Proposition 3]{GF}): 

\begin{thm}\thlabel{prop1}
		Let $k$ be an algebraically closed field, and $D$ a homogeneous locally nilpotent derivation on $k[X,Y,Z]$ with respect to the weights $(1,1,1)$. If $rank(D)=3$, then one of the following holds. 
	
	\begin{itemize}
		\item [\rm(a)] There exists a system of coordinates $\{U,V,W\}$, linear in $X,Y,Z$ such that $$\deg_{D}(U)<\deg_{D}(V)<\deg_{D}(W).$$
		
		\item[\rm(b)] Every linear term in $k[X,Y,Z]$ has the same $\deg_{D}$-value.
	\end{itemize}
\end{thm}
\noindent
In section 3, we show that the possibility (b) in \thref{prop1} does not hold. In particular we prove the following result  (\thref{li}).

\medskip
\noindent
{\bf Theorem.}
	Let $D$ be a homogeneous locally nilpotent derivation with respect to the standard weights $(1,1,1)$ on $B:=k[X,Y,Z]$ such that $rank(D) >1$. Then there exists a linear system of coordinates $\{L_{1}, L_{2}, L_{3}\}$ in $k[X,Y,Z]$ such that $$\deg_{D}(L_{1})< \deg_{D}(L_{2})< \deg_{D}(L_{3}).$$

 Using the above result, we give an alternative proof of \thref{thm1} in the Appendix (\thref{ra}).


\smallskip
In view of \thref{thm1}, we further ask the following question:

\medskip

\noindent 
\textbf{Question 1.} 
Let $R$ be a normal domain containing a field of characteristic zero, and $D$ be a homogeneous $R$-lnd on $R[X,Y,Z]$ such that $\deg(D) \leqslant 3$. 
Does this imply that $rank(D) < 3$?

\medskip
\noindent
In Section 4, we answer Question 1 affirmatively when $R$ is a PID and hence $\ker(D)=R^{[2]}$ (\thref{rpid}).
By the work of Rentschler (\cite{ren}) and Miyanishi (\cite{miya}) we know that for any lnd $D$ on $k^{[n]}$, $\ker(D)= k^{[n-1]}$, when $n \leqslant 3$. But such a result does not hold for all lnds over $k^{[4]}$. However, for an lnd $D \in \LND(k^{[4]})$ if $rank(D)<4$ and $\ker(D)$ is regular, then Bhatwadekar, Gupta and Lokhande have shown that $\ker(D)=k^{[3]}$ (\cite[Theorem 3.5]{bgl}). In \cite[Corollary 3.8]{nice}, Dasgupta and Gupta have shown that if $D$ is a nice lnd on $k^{[4]}$ of rank at most $3$, then $\ker (D)=k^{[3]}$. 
As an immediate consequence of \thref{rpid} of this note, we get the following result which exhibits another class of lnds over $k^{[4]}$ whose kernel is $k^{[3]}$ (\thref{cpid}). 

\medskip
\noindent
{\bf Theorem A.}
Let $D$ be a homogeneous locally nilpotent derivation on $k[X_{1},\dots,X_{4}]$ of degree at most $3$ with respect to the weights $(0,1,1,1)$ such that $DX_{1}=0$. Then $\ker(D)=k^{[3]}$ and $rank(D)<3$.

\medskip

 We further investigate Question 1 for Dedekind domains and higher dimensional UFDs. 
 We prove the following result which answers Question 1 affirmatively for Dedekind domains, under a sufficient condition (\thref{dd}):

\medskip
\noindent
{\bf Theorem B.}
		Let $R$ be a Dedekind domain containing $\mathbb{Q}$ and $D$ be a homogeneous locally nilpotent $R$-derivation on $R[X,Y,Z]$ such that  $\deg(D) \leqslant 3$. 
	    Then the following hold.
	
	\smallskip
	\noindent
	{\rm (i)}
	$\ker(D)$ is generated by at most $3$ elements. 
	
	\smallskip
	\noindent
	{\rm (ii)}
	If there exists a homogeneous system of generators of $\ker(D)$ among which exactly one term is linear, then that is a coordinate in $R[X,Y,Z]$ and hence $rank(D)<3$.

\medskip
\noindent
We also show that if condition (ii) in Theorem B is not satisfied, then $D$ can have full rank (\thref{exa 2}). 
 In \cite[Proposition 4.13]{bhd}, Bhatwadekar and Daigle have shown that for every  $R$-lnd $D$ on $R^{[3]}$, $\ker(D)$ is a finitely generated $R$-algebra, when $R$ is a Dedekind domain containing $\mathbb{Q}$. Further Daigle and Freudenburg have shown existence of a triangular lnd $D_n$ on $k^{[4]}$ whose kernel can not be generated by less than $n$ elements for every $n \in \mathbb{N}$, $n \geqslant 3$ (\cite{df}). Therefore, in general, over a Dedekind domain $R$, for an $R$-lnd $D$ on $R[X,Y,Z]$ there is no specific upper bound on minimum number of generators of $\ker(D)$. 
 But with the additional hypothesis that $D$ is homogeneous with $\deg(D) \leqslant 3$, Theorem B indeed gives an upper bound on minimum number of generators of $\ker(D)$.
 Also we give examples of higher dimensional UFDs for which the answer to Question 1 is negative (\thref{exa4}).       

In the next section we discuss some basic concepts on locally nilpotent derivations and some earlier results which will be used in this note.

\section{Preliminaries}
 We first recall some notation and definitions on locally nilpotent derivations (cf. \cite{GFB}). 

\begin{definition}\thlabel{def}
	\rm{Let $B$ be an integral domain containing $k$, $D$ a non-trivial lnd on $B$, and $A=\ker(D)$.}
	\begin{enumerate}
		\item[\rm{(i)}] \rm{ An element $r \in B$ is called a {\it local slice} of $D$, if $Dr \in ker D \setminus \{0\}$.}
		
		\item[\rm{(ii)}] \rm{$D$ is said to be {\it irreducible} if the ideal $\left(D(B)\right)$ is not contained in any proper principal ideal of $B$.
		}
		
		\item[\rm(iii)] \rm{$D$ defines a degree function $\mu:=\deg_{D}$ on $B$ such that  $\deg_{D}(0)=-\infty$ and for every nonzero $b \in B$ 
		$$
		\mu(b)\, (= \deg_{D}(b))=max\{n \in \mathbb{N}\,|\, D^n(b) \neq 0\}.
		$$}
		
		\item[\rm{(iv)}] \rm{Let $\mu$ be the degree function on $B$ induced by $D$. For a non-negative integer $n$, we define the {\it $n$-th degree $A$-module} with respect to $\mu$, as follows : 
			$$	
			\mathscr{F}_{n}=\{b \in B \mid \mu(b) \leqslant n\}.
			$$ }

		
		\item[\rm{(v)}] \rm{Let $G$ be a totally ordered Abelian group and $B$ a $G$-graded ring such that $B=\bigoplus_{i \in G}B_i$ is the $G$-graded structure on $B$. Then $D$ is said to a {\it homogeneous derivation} on $B$, if there exists some $d \in G$, such that $DB_i \subseteq B_{i+d}$ for every $i \in G$, and $\deg_G(D):=d$ is said to be the {\it degree} of $D$. 
		If $G=\mathbb{Z}$, then $\deg_{\mathbb{Z}}(D)$ is denoted by $\deg(D)$.} 
		
		
		\item[\rm{(vi)}] \rm{Let $B=k[X_1,\ldots,X_n]$ and $f_1,\ldots,f_{n-1} \in B$. For $\underline{f}:=(f_1,\ldots,f_{n-1})$, the {\it Jacobian derivation} $\Delta_{\underline{f}}$ on $B$ is defined as follows: 
			For every $g \in B$,
			$$
			\Delta_{\underline{f}}(g):=\frac{\partial(f_1,\ldots,f_{n-1},g)}{\partial(X_1,\ldots,X_n)}.
			$$
			}	 
	\end{enumerate}
	
\end{definition}

We now quote an important property of locally nilpotent derivations (cf. \cite[Principle 11, pg-27]{GFB}).

\begin{thm}\thlabel{localslice}
	Let $B$ be an integral domain containing $k$ and $D$ a non-zero locally nipotent derivation on $B$. Then for any local slice $r$ of $D$, $$B_{Dr}=(\ker (D))_{Dr}[r]~ (= (\ker (D)_{Dr})^{[1]}).$$ 
\end{thm}

The following lemma shows that in \thref{thm1} the assumption that ``$k$ is algebraically closed" can be dropped.

\begin{lem}\thlabel{thma}
	 Let $D$ be a homogeneous locally nilpotent derivation on $k[X,Y,Z]$ such that $\deg(D) \leqslant 3$. Then $rank(D) \leqslant 2$. 
\end{lem}
\begin{proof}
	Let $\overline{k}$ be an algebraic closure of $k$. 
	Note that $D$ extends to a homogeneous lnd $\overline{D}:=D \otimes_{k} \overline{k}$ on $\overline{k}[X,Y,Z]$ such that $\deg(\overline{D})=\deg(D) \leqslant 3$.
	 By \thref{thm1}, $rank(\overline{D})<3$, hence there exists a coordinate $V$ in $\overline{k}[X,Y,Z]$ such that $V\in \ker(\overline{D})$. 
	 Since $\overline{D}$ is homogeneous, we can assume that $V$ is linear in $X,Y,Z$. 
	 If $V \in k[X,Y,Z]$, then we are done. 
	 If not, then $V=\alpha_{1}X+\alpha_{2}Y+\alpha_{3}Z$ for some
	  $\alpha_1, \alpha_{2}, \alpha_{3} \in \overline{k}$ such that there exists some $i$, $1\leqslant i \leqslant 3$ such that $\alpha_i \in \overline{k} \setminus k$. 
	  Consider the finite field extension $\tilde{k}:=k(\alpha_{1},\alpha_{2}, \alpha_{3})$ of $k$, and let $\{\beta_1,\ldots,\beta_n\}$ be a basis for $\tilde{k}$ over $k$.  
	  Therefore, from the expression of $V$ it is clear that there exist linear terms $V_i \in k[X,Y,Z]$, such that 
	$$
	V= \sum_{i=1}^{n} \beta_i V_i.
	$$
	Since $V \in \ker(\overline{D})$ and $\beta_i$'s are linearly independent over $k$, it follows that $V_i \in \ker(D)$ for every $i, 1\leqslant i \leqslant n$. Hence $rank(D) \leqslant 2$.
\end{proof}
In the rest of this section we will recall some results which will be used later in this note.
The next lemma gives an important property of lnds (\cite[Section 1.2.4]{GFB}).

\begin{lem}\thlabel{loc}
	Let $R$ be an integral domain and $D \in \LND(R)$. Let $A=\ker(D)$ and $S$ be a multiplicatively closed subset of $A\setminus \{0\}$. Then $D$ will induce a locally nilpotent derivation $S^{-1}D$ on $S^{-1}R$. 
\end{lem}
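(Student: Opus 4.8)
The plan is to define $S^{-1}D$ by the quotient rule and then verify the three things required: that it is well defined on $S^{-1}R$, that it is a derivation, and that it is locally nilpotent. Since $R$ is a domain and $0 \notin S$, I would regard $S^{-1}R$ as a subring of the fraction field $K=\operatorname{Frac}(R)$, and for $a/s \in S^{-1}R$ set
$$(S^{-1}D)(a/s) = \frac{sD(a) - aD(s)}{s^2}.$$
Here it is crucial that the denominators lie in $A=ker(D)$, the set $S$ being taken inside $A \setminus \{0\}$ (consistent with the cited reference, which localizes at the kernel). Then $D(s)=0$, and the formula collapses to the much simpler expression $(S^{-1}D)(a/s) = D(a)/s$.

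First I would check well-definedness. If $a/s = b/t$ in $S^{-1}R$, then $at = bs$ in $R$ because $R$ is a domain; applying $D$ and using $D(s)=D(t)=0$ gives $tD(a) = sD(b)$, whence $D(a)/s = D(b)/t$. Thus the value is independent of the chosen representative. Additivity and $k$-linearity are immediate, and the Leibniz rule for the product $(a/s)(b/t) = ab/(st)$ follows from the Leibniz rule for $D$ together with $st \in ker(D)$; this shows that $S^{-1}D$ is a $k$-derivation of $S^{-1}R$ extending $D$.

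The key step is local nilpotence. Because every denominator lies in $ker(D)$, an easy induction on $n$ gives
$$(S^{-1}D)^n(a/s) = \frac{D^n(a)}{s} \qquad (n \geq 1),$$
the inductive step being $(S^{-1}D)(D^{n-1}(a)/s) = D(D^{n-1}(a))/s$. Since $D$ is locally nilpotent on $R$, for each $a \in R$ there is an $n$ with $D^n(a)=0$, and therefore $(S^{-1}D)^n(a/s)=0$. Hence $S^{-1}D \in LND(S^{-1}R)$, and the same displayed formula shows $ker(S^{-1}D) = S^{-1}A$.

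I expect the only real subtlety to be the observation that drives the whole argument: $S$ must be taken inside the kernel $A$, so that $D$ annihilates the denominators and the factor $1/s$ simply rides along untouched. This hypothesis is genuinely needed for local nilpotence, not merely for convenience: the quotient-rule extension is a well-defined derivation for any multiplicatively closed $S \subseteq R \setminus \{0\}$, but it need not be locally nilpotent when $S \not\subseteq A$ — for instance, $d/dx$ on $k[x]$ localized at $\{x^n\}$ satisfies $(d/dx)^n(1/x) = (-1)^n n!\,x^{-(n+1)} \neq 0$. Once $S$ is restricted to $A \setminus \{0\}$, local nilpotence transfers verbatim from $D$.
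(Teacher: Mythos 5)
The paper offers no proof of this lemma at all---it is quoted from Freudenburg's book (Section 1.2.4) as a preliminary---so there is no in-paper argument to compare against; your proof is correct and is essentially the standard one from the cited reference. You were also right to read the hypothesis as $S \subseteq A \setminus \{0\}$ rather than $S \subseteq R \setminus \{0\}$: the paper's statement defines $A=\ker(D)$ and then never uses it, which is evidently a typo, your $d/dx$ example on $k[x]$ localized at the powers of $x$ shows the literal statement is false, and in every application in the paper (e.g.\ Section 5, where $D$ is an $R$-derivation of $R[X,Y,Z]$ and $S=R\setminus\{0\}$, so $S$ lies in the kernel) the denominators are indeed killed by $D$. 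Your verification is complete on all three required points---well-definedness via $at=bs$ in the domain, the Leibniz rule, and local nilpotence through the induction $(S^{-1}D)^n(a/s)=D^n(a)/s$---and the observation $\ker(S^{-1}D)=S^{-1}A$ is a correct bonus that the paper implicitly uses when it writes $\ker(S^{-1}D)=K[U,N]$ and similar identities later on.
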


Next we recall an important result of Wang. Before stating the result we give a few definitions as follows:

Let $B=k[X_{1}, \ldots , X_{m}]$. For an lnd $D$ on $B$, Wang (\cite[Section 3]{1})  defined a  new degree function $\overline{\mu}: B \to \bZ \cup \{-\infty\}$ with respect to $\mu := \deg_{D}$ such that 
\begin{equation}\label{mu}
\overline{\mu}(b)= max\left\{\mu(M_{j}) \mid \, b=\sum_{j=1}^{n}M_{j}\,\, \text{and}\, M_{j}\,\, \text{is a monomial in} \, X_{1}, \ldots , X_{m} \right\}.
\end{equation}
Now $B=k[X_{1}, \ldots, X_{m}]$ has a graded structure such that $B=\oplus_{i \in \mathbb{Z}} \overline{B}_{i}$ with $wt (X_{j})= \overline{\mu}(X_{j})=\mu(X_j)$, $1 \leqslant j \leqslant m$, where $\overline{B}_{i}$ is a vector space generated by
\begin{equation}\label{m}
\left\{b \in B \mid \,b\,\, \text{is a monomial in $X_{1},\ldots,X_{m}$ and}\,\, \overline{\mu}(b)=i \right\}.
\end{equation}
We now consider the associated graded integral domain of $B$ with respect to $\mu$, that is 
$$
gr_{\mu}B=\bigoplus_{i \geqslant 1} \frac{\mathscr{F}_{i}}{\mathscr{F}_{i-1}}.
$$
Since for every $b \in B$,  $\mu(b) \leqslant \overline{\mu}(b)$, we have a natural graded mapping 
\begin{equation}\label{phi}
\phi: B=\oplus_{i\in \mathbb{Z}} \overline{B}_{i} \rightarrow gr_{\mu}B ,
\end{equation}
such that $\ker(\phi)=\p$ is a $\overline{\mu}$-homogeneous prime ideal of $B$ such that a $\overline{\mu}$-homogeneous element $f\in \p$ if and only if $\mu(f)< \overline{\mu}(f)$.

\smallskip

 We now quote the following result of Wang (\cite[Lemma 4.3]{1}). 
\begin{lem}\thlabel{wa}
Let $B=k[X_{1}, X_{2},  X_{3}]$, $D$ a non-zero locally nilpotent derivation on $B$ and $\overline{\mu}$ the degree function on $B$ defined as in \eqref{mu}. If $\overline{\mu}(X_{i})= d_{i}$ and $D$ is $\overline{\mu}$-homogeneous, then ${\rm ht}(\ker(\phi))=2$  $(\phi \text{~as in~} (\ref{phi}))$ and for each $i,j$, $1 \leqslant i,j \leqslant 3$, there exists $\alpha_{ij} \in k^{*}$, such that $X_{i}^{d_{j}}-\alpha_{ij}X_{j}^{d_{i}} \in \ker(\phi)$.  
\end{lem}

The structure of an lnd on $k^{[3]}$ in terms of the Jacobian derivation is obtained from the following result of Daigle (\cite[Corollary 2.5]{daig}).

\begin{thm}\thlabel{dai}
	Let $B=k^{[n]}$ and $D \in \rm{LND}(B)$ such that $\ker(D)=k^{[n-1]}$. If $\{f_{1},\cdots,f_{n-1}\}$ be a set of algebraically independent elements in $B$ such that $\ker(D)=k[f_{1},\ldots,f_{n-1}]$, then $\Delta_{(f_{1},\ldots,f_{n-1})} \in \LND(B)$ and $D=a \Delta_{(f_{1},\ldots,f_{n-1})}$ for some $a \in \ker(D)$.
\end{thm}



The following theorem is due to Zurkowski (\cite{zur}), which gives the structure of the kernel of a homogeneous lnd on $k[X,Y,Z]$ with respect to a positive grading.

\begin{thm}\thlabel{zu}
Let $D$ be a non-zero homogeneous locally nilpotent derivation with respect to some positive grading $\omega$ on $k[X,Y,Z]$ and $A=\ker(D)$. Then there exist homogeneous polynomials $F, G$ with respect to the grading $\omega$ such that $A=k[F,G]$.
\end{thm}

Next we recall the following well-known result by Abhyankar, Eakin and Heinzer (\cite[Theorem 4.1]{AEH}).

\begin{thm}\thlabel{aeh}
Let $R$ and $S$ be \rm{UFDs} and $B= R^{[n]}$. 
Suppose that the transcendence degree of $S$ over $R$ is $1$,
and that $R \subseteq S \subseteq	B$. 
Then $S = R^{[1]}$.
\end{thm}

We now define a unimodular row over a commutative ring $R$ and quote a basic result on their `completability'. 

\begin{definition}
	\rm{Let $R$ be a commutative ring with unity. A row $\underline{r}:=(r_1,\ldots,r_n)$ of length $n$ with entries in $R$ is said to be unimodular, if there exist $s_1,\ldots,s_n \in R$ such that $\sum_{i=1}^{n} r_i s_i =1$. 
	A unimodular row $\underline{r}$ is said to be completable, if there exists $M \in \rm{GL}_n(R)$ such that $\underline{r}M =e_1$, where $e_1:=(1,0,\ldots,0)$. } 
\end{definition}
The following result is on completability of unimodular row over a commutative ring. For reference one can see \cite{bass}.  
\begin{thm}\thlabel{uni}
	Let $R$ be Noetherian ring of dimension $d$. Then every unimodular row on $R$ of length $n \geqslant d+2$ is completable.
\end{thm}

The following result was proved by Bass, Connell and Wright in \cite{bcw} and independently by Suslin in \cite{su}.

\begin{thm}\thlabel{bcw}
Let $A$ be a ring and $B$ a finitely presented $A$-algebra. Suppose that the $A_{\m}$-algebra $B_{\m}$ is isomorphic to the symmetric algebra
of some $A_{\m}$-module for each maximal ideal $\m$ of $A$. Then $B \cong {\rm{Sym}}_{A}(M)$ for some finitely presented $A$-module $M$.
\end{thm}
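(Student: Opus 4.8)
The plan is to prove the statement by a local-to-global argument: I would construct, canonically from $B$, a finitely presented $A$-module $M$ together with an $A$-algebra homomorphism $\psi \colon Sym_{A}(M) \to B$, and then show that $\psi$ is an isomorphism by checking it after localizing at each maximal ideal $\m$ of $A$. Since a homomorphism of modules, and hence of finitely presented algebras, that is an isomorphism at every maximal ideal is an isomorphism, this reduces the global statement to the hypothesis, which is precisely that $B_{\m} \cong Sym_{A_{\m}}(M_{\m})$ locally. The whole difficulty is therefore pushed into producing $M$ and $\psi$ in a \emph{canonical} (trivialization-independent) way, so that the local pieces automatically agree on overlaps.

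First I would spread out the hypothesis. Because $B$ is finitely presented over $A$, the property ``$B_{\m}$ is a symmetric algebra'' propagates to a Zariski neighborhood of $\m$: one finds $f \notin \m$ with $B_{f} \cong Sym_{A_{f}}(M_{f})$ for a finitely presented $A_{f}$-module $M_{f}$, by a standard limit argument writing $A$ as a filtered colimit of finitely generated subrings over which everything is defined. Covering $\Spec(A)$ by finitely many $\Spec(A_{f_{i}})$, I obtain local symmetric-algebra structures, and the problem becomes one of descent along this Zariski cover. The natural candidate for $M$ comes from an augmentation: a symmetric algebra $Sym_{A}(M)$ carries its zero section $\epsilon \colon Sym_{A}(M) \to A$, and the augmentation ideal $I = \ker(\epsilon)$ satisfies $M \cong I/I^{2}$ canonically, with a splitting of $I \to I/I^{2}$ reconstructing both $M$ and the comparison map. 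Thus, granting a \emph{global} augmentation $\epsilon \colon B \to A$, I would set $M := I/I^{2}$; this is finitely presented since $B$ is, it localizes correctly to the $M_{f_{i}}$, and the induced $\psi$ is, at each $\m$, the standard isomorphism $Sym_{A_{\m}}(I_{\m}/I_{\m}^{2}) \xrightarrow{\sim} B_{\m}$. The local-global principle above then finishes the proof.

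The hard part, and the one genuine obstacle, is producing the global augmentation $\epsilon$. On each patch there is a local zero section $\epsilon_{i} \colon B_{f_{i}} \to A_{f_{i}}$, but these need not agree on overlaps: the transition identifications between the local symmetric-algebra structures are arbitrary $A$-algebra automorphisms of symmetric algebras, which are highly nonlinear (they include all translations and higher-order polynomial automorphisms), so naive descent of either the module or the section fails outright. The observation that rescues the argument is that, for a \emph{fixed} symmetric algebra, the set of $A$-algebra augmentations $Sym_{A}(M) \to A$ is canonically $\mathrm{Hom}_{A}(M, A) = M^{\vee}$, by the adjunction defining $Sym$. Hence the discrepancies $\epsilon_{i} - \epsilon_{j}$ on overlaps assemble into a \v{C}ech $1$-cocycle valued in the \emph{quasicoherent} sheaf associated to $M^{\vee}$ on $\Spec(A)$. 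Since $\Spec(A)$ is affine, $H^{1}(\Spec(A), M^{\vee}) = 0$, so this cocycle is a coboundary; correcting each $\epsilon_{i}$ by the corresponding $0$-cochain glues the local sections into a single global $\epsilon \colon B \to A$ of the required local shape.

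I expect the delicate point to be making this cocycle argument genuinely canonical, since $M$ itself is only defined patchwise until the augmentation is fixed, so the module and the section must be reconstructed simultaneously rather than one after the other. The intended device is to extract $M$ intrinsically from the canonical $B$-module $\Omega_{B/A}$ (for $B = Sym_{A}(M)$ one has $\Omega_{B/A} \cong B \otimes_{A} M$, and pullback along the zero section recovers $M$), and to organize the patching so that the $H^{1}$-vanishing above rigidifies the sections against precisely the nonlinear transition data. Once $\epsilon$, $M$, and $\psi$ are in hand, verifying that $\psi$ is an isomorphism at each $\m$ is immediate from the hypothesis, and finite presentation of $M$ follows from that of $B$, completing the argument.
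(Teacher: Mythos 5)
First, a point of reference: the paper does not prove this statement at all --- it is quoted as a known theorem of Bass--Connell--Wright and Suslin, so your attempt can only be judged on its own merits, and on those merits it has a genuine gap at its central step. The scaffolding is fine: spreading the hypothesis out to a finite Zariski cover by standard limit arguments, setting $M = I/I^2$ once a global augmentation $\epsilon \colon B \to A$ is in hand, gluing a splitting of $I \twoheadrightarrow I/I^2$ (here the differences of local splittings really do live in $\mathrm{Hom}_{A}(I/I^2, I^2)$, which localizes correctly because $I/I^2$ is finitely presented, so \v{C}ech $H^1$-vanishing on the affine $\Spec(A)$ applies), and checking that $\psi$ is an isomorphism maximal-ideal by maximal-ideal. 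The problem is the production of $\epsilon$ itself, which you correctly identify as the crux but do not actually overcome.

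The \v{C}ech argument for gluing the local augmentations $\epsilon_i$ is not valid as stated, for two compounding reasons. First, the circularity you flag is not a presentational nuisance but fatal to the cocycle formulation: the coefficient sheaf ``$M^{\vee}$'' does not exist until $M$ does, and $M$ is only defined patchwise; the difference $\epsilon_i - \epsilon_j$ is a priori just an $A$-linear map $B_{f_i f_j} \to A_{f_i f_j}$, and the functor $U \mapsto \mathrm{Hom}_{\mathcal{O}(U)}(B|_U, \mathcal{O}(U))$ is not quasicoherent since $B$ is finitely presented as an algebra, not as a module. Second, even morally, the set of augmentations of $B$ is not a torsor under any quasicoherent sheaf: on an overlap the two identifications of $B$ with symmetric algebras differ by a polynomial (nonlinear) automorphism, and under such an automorphism the bijection between augmentations of $Sym_{A}(M)$ and $M^{\vee}$ transforms nonlinearly, so the discrepancies do not form a $1$-cocycle in a fixed $\mathcal{O}$-module and $H^1$-vanishing has no purchase. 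Your proposed rescue via $\Omega_{B/A}$ is circular for the same reason: recovering $M$ from $\Omega_{B/A} \cong B \otimes_A M$ requires pulling back along the zero section, i.e., along the very augmentation being constructed. Indeed the existence of a global section of $\Spec(B) \to \Spec(A)$ is essentially equivalent to the theorem itself, and the actual proofs of Bass--Connell--Wright and of Suslin expend most of their effort on precisely this nonlinear descent problem, by arguments considerably more elaborate than quasicoherent cohomology vanishing.
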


We conclude this section with Serre's Splitting Theorem (\cite[Theorem 7.1.8]{IR}).

\begin{thm} \thlabel{s}
Let $R$ be a Noetherian ring of finite Krull dimension $d$ and $P$ a
finitely generated projective $R$-module such that $rank(P)>d$. Then $P$ has a unimodular element. As a result, if $rank(P)=d+m \,(m \geqslant 1)$, then $P \cong R^{m} \oplus Q$, where $Q$ is a projective $R$-module of rank $d$.
\end{thm}

\section{Rank and kernel of homogeneous $R$-lnds on $R[X,Y,Z]$  }	
Throughout this section $D$ is a homogeneous $R$-lnd on $R[X,Y,Z]$ with respect to the weights $(1,1,1)$, where $R$ is a commutative Noetherian integral domain containing $\mathbb{Q}$. 
The main aim of this section is to investigate Question 1.
We begin this section with the following important lemma.

\begin{lem}\thlabel{lpid}
	Let $R$ be a {\rm PID} and $D$ be a  homogeneous locally nilpotent $R$-derivation on $R[X,Y,Z]$ with respect to the weights $(1,1,1)$. 
	Let $S=R \setminus \{0\}$ and $K=S^{-1}R$. 
	If $S^{-1}D \in \LND(K[X,Y,Z])$ denotes the natural extension of $D$, then $rank(S^{-1}D)=rank (D)$.
\end{lem}

\begin{proof}
	\smallskip
	\noindent	
	{\it \bf Case }{\bf 1}: If $rank(S^{-1}D)=3$, then clearly $rank(D)=3$.
	
	\smallskip
	\noindent
	{\it \bf Case }{\bf 2}: If $rank(S^{-1}D)=2$, then $rank(D) \geqslant 2$. 
	Since $rank(S^{-1}D)<3$ and $S^{-1}D$ is homogeneous lnd on $K[X,Y,Z]$, there exists a linear coordinate $L$ in $K[X,Y,Z]$ such that $L \in \ker(S^{-1}D)$. 
	By clearing denominators we can assume that $L=aX+bY+cZ$ where $a,b,c \in R$ and $\gcd(a,b,c)=1$.
	As $R$ is a PID, $(a, b, c)$ is a unimodular row over $R$ and hence completable. Therefore, $L$ is a coordinate in $R[X,Y,Z]$ and $L \in \ker(D)$. Therefore, $rank(D)<3$ and hence $rank(D)=2$.
	
	\smallskip
	\noindent
	{\it \bf Case }{\bf 3}: Let $rank(S^{-1}D)=1$. As shown in Case 2, we have $rank(D)<3$, and hence we get a linear system coordinates $\{U,V,W\}$ in $R[X,Y,Z]$ such that $U \in \ker(D)$. 
	Note that $K[X,Y,Z]=K[U,V,W]$ and $U \in \ker(S^{-1}D)$.
	Since $S^{-1}D$ is a homogeneous lnd and $rank(S^{-1}D)= 1$, $\ker(S^{-1}D)=K[U, N]$, where $N= \lambda V+ \mu W$ for some $\lambda, \mu \in K$. 
	Now clearing denominators we get $\tilde{N} :=a_{v}V+a_{w}W \in \ker(D)$ for some $a_v,a_w \in R$ and without loss of generality we can assume that $\gcd(a_{v},a_{w})=1$. 
	If $b_v,b_w \in R$ be such that $a_{v}b_{w}-a_{w}b_{v}=1$, then for
	$\tilde{P}=b_{v}V+b_{w}W$,
	$R[X,Y,Z]=R[U, \tilde{N}, \tilde{P}]$, and hence $rank(D)=1$.
\end{proof}

\begin{rem}
	\em{
		In \thref{lpid} none of the conditions that $D$ is homogeneous and $R$ is a PID can be dropped (see Examples \ref{expid} and \ref{r1} below). 
		
	}
\end{rem}

\begin{ex}\thlabel{expid}
	\em{For $R=k[t],$ we consider the $R$-lnd $D$ on $R[X,Y,Z]$ defined by
		$$ 
		DX=0 ,\,\, DY=X ,\,\, DZ=t.
		$$ 
		Note that $D$ is not homogeneous with respect to the weights $(1,1,1)$. Clearly, $\ker(D)=R[X, tY-XZ]$ and $rank(D)=2$. But for $S=R\setminus \{0\}$,  $\ker(S^{-1}D)=k(t)[X,tY-XZ]$, and hence $rank(S^{-1}D)=1$. Therefore, \thref{lpid} does not hold without the condition that $D$ is homogeneous.} 
\end{ex}


\begin{ex}\thlabel{r1}
	{\em
		Let  $B=R[X,Y,Z]$ where $R=\frac{\mathbb{R}[W_{1},W_{2}]}{(W_{1}^2+W_{2}^2-1)}.$ Let $w_{1}$ and $w_{2}$ denote the residue classes of $W_{1}$ and $W_{2}$ in $R$ respectively. Let $X_{1}=w_{1}X+(1-w_{2})Y \in R[X,Y,Z]$ and $X_{2}=(1+w_{2})X+w_{1}Y$.
		For $d \geqslant 0$ we define a homogeneous $R$-lnd $D$ of degree $d$ on $B$ as follows:
		\begin{align*}
			&DX=(1-w_{2})X_{1}^{d+1},\\
			&DY=-w_{1}X_{1}^{d+1},\\
			&DZ=(d+2)w_{1}Y^{d+1}, 
		\end{align*}
		We now show the following.
		
		\noindent
		\begin{enumerate}
			\item[(i)]  $DX_1=0$ and $DX_2=0$.
			
			\item[(ii)] $\deg_D (X)=\deg_D(Y)=1$ and $\deg_D(Z)=d+2$.

			\item[(iii)] Neither $X_{1}$ nor $X_2$ is a coordinate in $R[X,Y,Z]$.

			\item[(iv)] 	$A:=\ker(D) \neq R^{[2]}$.
			
			\item[(v)] $rank(D)=3$, and for $S=R \setminus \{0\}$, $rank(S^{-1}D)=2$.
		\end{enumerate} 
		Proofs of (i) and (ii) are easy to see. 
		
		\medskip
		\noindent
		(iii) We first recall that if $(r_1X+r_2Y+r_3Z)$ is a coordinate in $R[X,Y,Z]$, then $(r_1,r_2,r_3)R=R$. Now since $(w_{1}, 1-w_{2})$ and $(w_{1}, 1+w_{2})$ both are maximal ideals of $R$, we get that none of $X_{1}$ and $X_{2}$ is a coordinate in $R[X,Y,Z]$.
		
		\medskip
		\noindent
		(iv) Let $F:=(Y^{d+2}+X_{1}^{d+1}Z)$. Then 
		$$ R[X_{1}, X_{2}, F] \subseteq \ker(D)=A.$$ 
		Note that $(1+w_{2})X_{1}=w_{1}X_{2}$ and $(1-w_{2})X_{2}=w_{1}X_{1}$. Now 
		$R_{1+w_{2}}[X_{1},X_{2},F]=R_{1+w_{2}}[X_{2},F] \subseteq A_{1+w_{2}}$ such that both the rings have the same transcendence degree over $\mathbb{R}$, and also $R_{1+w_{2}}[X_{2},F]$ is factorially closed in $B_{1+w_2}$. Therefore, $$\ker(D_{1+w_{2}})=A_{1+w_{2}}=R_{1+w_{2}}[X_{2},F].$$ Similarly, $\ker(D_{1-w_{2}})=A_{1-w_{2}}=R_{1-w_{2}}[X_{1},F]$.
		Since the ideals $(1+w_{2})R$ and $(1-w_{2})R$ are comaximal, for every maximal ideal $\m$ of $R$, we have $A_{\m}=R_{\m}[X_{1},X_{2},F]$. Therefore, by the local-global principle, we have $A=R[X_{1}, X_{2}, F] $. Now if we put $U= \frac{X_{1}}{w_{1}}$, then $X_{2}=(1+w_{2})U$. Then we have 
		$$
		A=R[w_{1}U, (1+w_{2})U, F]\cong {\rm{Sym}}_{R}(I \oplus R)
		$$ 
		where $I=(w_{1}, 1+w_{2})R$ is a rank one projective $R$-module. Since the ideal $(w_{1}, 1+w_{2})R$ is not principal, it is not free. As rank 1 projective modules are cancellative, $I$ can not be stably free. Hence $A \neq R^{[2]}$.
		
		\medskip
		\noindent
		(v)  Suppose there exists a coordinate $V$ in $R[X,Y,Z]$ such that $V \in A$. Now note that $\deg_{D}(X)=\deg_{D}(Y)=1$ and $\deg_{D}(Z)=d+2$. As $D$ is homogeneous, we can assume that $V$ is linear. 
		Since $V$ is a coordinate in $R[X,Y,Z]$, it is irreducible and hence it follows that $A_{1+w_2}=R_{1+w_2}[X_2,F]=R_{1+w_2}[V,F]$ and $A_{1-w_2}=R_{1-w_2}[X_1,F]=R_{1-w_2}[V,F]$.
		Therefore, by local-global principle, we have 
		$ A= R[V,F]=R^{[2]}$ which contradicts (iv). Therefore, $\ker(D)$ can not contain a coordinate in $R[X,Y,Z]$, and 
		hence $rank(D)=3$. Now since $\ker(S^{-1}D)= 
		S^{-1}A=S^{-1}R[X_1, F]$, and $X_1$ is a coordinate in $S^{-1}R[X,Y,Z]$, we have $rank(S^{-1}D)=2$. 
		Therefore, it shows that \thref{lpid} does not hold without the condition that $R$ is a PID. \qed
%
		}
\end{ex}

We now answer Question 1 for PIDs.

\begin{thm}\thlabel{rpid}
	Let $R$ be a {\rm PID} containing $\mathbb{Q}$, $D$ a homogeneous locally nilpotent $R$-derivation on $R[X,Y,Z]$ with respect to the weights $(1,1,1)$. If $\deg(D) \leqslant 3$, then
	$rank(D)<3$ and $\ker(D)=R^{[2]}$.
\end{thm}
\begin{proof}
	Let $S$, $K$, $S^{-1}D$ be as in \thref{lpid}. Since $\deg(S^{-1}D)=\deg(D) \leqslant 3$, by \thref{thma} we have $rank(S^{-1}D)<3$. Therefore, by \thref{lpid} it follows that $rank(D)<3$, and hence $\ker(D)=R^{[2]}$ by \thref{aeh}.   
\end{proof}

As an immediate consequence of the above result we now prove Theorem A. It provides a family of special kind of lnds on $k^{[4]}$, whose kernels are $k^{[3]}$.
\begin{cor}\thlabel{cpid}
Let $k$ be a field of characteristic zero and $E$ be a homogeneous locally nilpotent derivation on $k[X_{1},\dots,X_{4}]$ of degree at most $3$ with respect to the weights $(0,1,1,1)$ with $EX_{1}=0$. Then $\ker(E)=k^{[3]}$ and $rank(E)<3$.
\end{cor}

\begin{proof}
 Let $R=k[X_{1}]$. Then the result follows from \thref{rpid}.  
\end{proof}

 We now investigate Question 1 over Dedekind domains and prove Theorem B. For any Dedekind domain $R$ and a homogeneous $R$-lnd $D$ on $R[X,Y,Z]$ of $\deg(D) \leqslant 3$, the following theorem gives a sufficient condition for $rank(D)<3$.

\begin{thm}\thlabel{dd}
	Let $R$ be a Dedekind domain containing $\mathbb{Q}$ and $D$ be a homogeneous locally nilpotent $R$-derivation on $R[X,Y,Z]$ such that  $\deg(D) \leqslant 3$. 
	Then the following hold.
	
	\smallskip
	\noindent
	 {\rm (i)}
	 $\ker(D)$ is generated by at most $3$ elements.
	 
	 \smallskip
	 \noindent
	 {\rm (ii)}
	 If there exists a minimal system of homogeneous generators for $\ker(D)$ among which exactly one term is linear, then that is a coordinate in $R[X,Y,Z]$ and hence $rank(D)<3$.
\end{thm}
\begin{proof}
	(i) Let $A=\ker(D)$. Since $R$ is a Dedekind domain, $R_{\m}$ is a DVR for every maximal ideal $\m$ of $R$. 
	Note that $D$ can be extended to a homogeneous $R_{\m}$-lnd $D_{\m}$ on $R_{\m}[X,Y,Z]$. 
	By \thref{rpid},
	$A_{\m}=\ker(D_{\m})=R_{\m}^{[2]}.$
	Therefore, by \thref{bcw}, there exists a projective $R$-module $P$ of rank 2 such that
	$ A \cong {\rm{Sym}}_{R}(P).$ 
	Since $rank(P)> \dim(R)$, by \thref{s}, we have $P \cong I \oplus R$ where $I$ is rank 1 projective $R$-module and hence it is a fractional ideal of $R$. As $R$ is a Dedekind domain, the minimum generating set of $I$ can contain at most two elements. Now as $A={\rm{Sym}}_{R}(I \oplus R)$ the result follows.
	
	\smallskip
	\noindent
	(ii) Let	$V: =t_{1}X+t_{2}Y+t_{3}Z$
	be the only linear term among a minimal system of homogeneous generators for $\ker(D)$. If possible, suppose there exists a maximal ideal $\m$ of $R$ such that 
	$(t_{1}, t_{2}, t_{3})R \subseteq \m.$
	Let $\m R_{\m}=(t)R_{\m}$, for some $t\in R$. Then in $R_{\m}[X,Y,Z]$ 	
	$$
	V=(t_{1}X+t_{2}Y+t_{3}Z)=t^{d_{1}}\frac{r_{1}}{s_{1}}X+t^{d_{2}} \frac{r_{2}}{s_{2}}Y+t^{d_{3}}\frac{r_{3}}{s_{3}}Z
	$$
	where $r_{i}, s_{i} \notin \m$ for $i=1,2,3$ and without loss of generality we can assume $d_{1} \leqslant d_{2} \leqslant d_{3}$. Now 
	$$
	s_{1}s_{2}s_{3}V = t^{d_{1}}(r_{1}s_{2}s_{3}X+t^{d_2-d_{1}}r_{2}s_{1}s_{3}Y+t^{d_{3}-d_{1}}r_{3}s_{1}s_{2}Z)\in \ker(D).
	$$
	Therefore,
	$
	U:=(r_{1}s_{2}s_{3}X+t^{d_2-d_{1}}r_{2}s_{1}s_{3}Y+t^{d_{3}-d_{1}}r_{3}s_{1}s_{2}Z) \in \ker(D).
	$
	As $V$ is the only linear element among that system generators of the kernel, there exists $\alpha \in R$ such that 
	$U=\alpha V$, which implies that
	$ r_{1}s_{2}s_{3} = \alpha t_{1} \in \m$. 
	But this is a contadiction. Therefore, $(t_{1},t_{2},t_{3})R=R$. Since $R$ is Dedekind domain, the unimodular row $(t_{1}, t_{2}, t_{3})$ is completable (cf.  \thref{uni}). Hence $V$ is a coordinate in $R[X,Y,Z]$ and $rank(D)<3$.
\end{proof}

\begin{rem}\thlabel{exa 2}
	\em In \thref{r1}, we see that the condition on $\ker(D)$ as in \thref{dd}(ii) is not satisfied and $rank(D)=3$.
	Thus answer to Question 1 is not affirmative in general for Dedekind domains. 
\end{rem}

We now investigate Question 1 for UFDs in higher dimensions $(\geqslant 2)$. The following example gives negative answer to Question 1 for $R=k^{[n]}$, $n \geqslant 2$. The degree of the derivation in the following example is an arbitrary non-negative integer.  

\begin{ex}\thlabel{exa4}
	\em{Let $R=k[t_1,t_2]^{[n-2]}=k^{[n]}$, where $n \geqslant 2$, $B=R[X,Y,Z]$ and $U=t_1X+t_2Y$. Consider the following homogeneous $R$-lnd of degree $d (\geqslant 0)$ on $B$ as follows:
		\begin{align*}
			&DX=t_2U^{d+1},\\
			&DY=-t_1U^{d+1},\\
			&DZ=Y^{d+1}. 
		\end{align*}
		Note that $U \in \ker(D)$ but $U$ is not a coordinate in $B$. Also $\deg_D(X)=\deg_D(Y)=1$ and $\deg_D(Z)=d+2$.
		
		If $rank(D)<3$, then there exists a coordinate $V$ in $B$
		such that $V=aX+bY+cZ$ and $V \in \ker(D)$. If $c \neq 0$, clearly $\deg_D V=d+2$. Hence $V=aX+bY$. Since $DV=0$, $at_2=bt_1$. As $R$ is a UFD and $\gcd(t_1,t_2)=1$, it follows that $t_1 \mid a$ and $t_2 \mid b$. Thus, $V=rU$ for some $r \in R$ and $(a,b)R \subsetneq R$, that means $V$ can not be a coordinate in $B$. Therefore, $rank(D)=3$.	}
\end{ex}

Now in view of \thref{rpid} the following question arises. 

\medskip

\noindent
\textbf{Question 2:}
Let $R$ be a PID containing a field of characteristic zero, and $D$ be a homogeneous locally nilpotent $R$-derivation on $R[X,Y,Z]$ such that $\deg(D) > 3$. Then what information can we get about $rank(D)$ and $\ker(D)$?

\smallskip
\noindent
When $R$ is a field, Freudenburg constructed a family of homogeneous lnds on $k[X,Y,Z]$ of rank 3 and degree $>3$ (cf. In \cite[Theorem 4]{GF}). We modify this example of Freudenburg to construct a homogeneous lnd $D$ on $R[X,Y,Z]$ of $\deg (D)>3$ and $rank(D)=3$ over a PID $R$. We further show that $\ker(D) \neq R^{[2]}$ unlike the case over fields.
 We first record the following lemma.

\begin{lem}\thlabel{ex1}
Let $R,S$ be integral domains such that $R \subseteq S$. Suppose there exists $a \in R$ such that $R_{a}=S_{a}$ and $aS \cap R=aR$. Then $R=S$.
\end{lem}

\begin{ex}\thlabel{exa 1}
{\em
	Let $k$ be a field of characteristic zero, $R=k[t]$ and $B=R[X,Y,Z]$. Consider the following polynomials in $R[X,Y,Z]$ which are homogeneous with respect to the standard weights $(1,1,1)$.
	\begin{align*}
		&F=X(tZ+X)-t^2Y^2,\\
		&G=(tZ+X)F^2+2tX^2YF+X^5,\\
		&P=tYF+X^3.
	\end{align*}
	\noindent
	Now with respect to the standard weights $(1,1,1)$ we define a homogeneous $R$-derivation $D=\Delta_{(F,G)}$ on $B$. Note that
	\begin{align*}
		&DX=-2t^2FP,\\
		&DY=t(6X^2P-G),\\
		&DZ=2X(5t^2YP+tF^2)+2tFP.
	\end{align*}
Let $U=X, V=tY, W=tZ+X.$ Then $k(t)[X,Y,Z]=k(t)[U,V,W]$ and 
\begin{align*}
	& F=UW-V^2,\\
	& G=WF^2+2U^2VF+U^5.
\end{align*}
Let $B_{1}=k(t)[U,V,W]$. Then $D$ induces a homogeneous derivation $D_{1}:=S^{-1}D$ on $B_1$ with respect to the standard grading $(1,1,1)$, where $S=R\setminus\{0\}$. By \cite[Theorem 4]{GF}, \cite[Theorem 5.23]{GFB}, $D_1$ is an lnd such that $\ker(D_{1})=S^{-1}C=k(t)[F,G]$. Now since $B$ is an integral domain, it follows that $D$ is an $R$-lnd on $B$. 
Further note that $rank(D_1)=3$ (cf. \cite[Theorem 4]{GF}, \cite[Theorem 5.23]{GFB}). Since $rank(D) \geqslant rank(D_1)$, it follows that $rank(D)=3$.  

We now show that $A:=\ker(D) \neq R^{[2]}$.
	We first calculate the kernel of $D$. We observe that
	$
	F=X^2+tF_{1}
	$ and  
	$
	G=2X^5+tG_{1},
	$
	where $$F_{1}=XZ-tY^2$$ and
	$$G_{1}=X^4Z+2tX^2F_{1}Z+t^2F_{1}^2Z+2X^3F_{1}+tXF_{1}^2+ 2X^2YF.$$
	Therefore, $F,G,t$ satisfies
	\begin{equation}\label{19}
		G^2-4F^5=tH
	\end{equation}
	where
	$$
	H=4X^5G_{1}+tG_{1}^2-20X^8F_{1}-40tX^6F_{1}^2-40t^2X^4F_{1}^3-20t^3X^2F_{1}^4-4t^4F_{1}^5 .
	$$
	
	By \eqref{19}, we have $H \in A$. Consider the subring $C=k[t,F,G,H] \subseteq A$. We will show that $C=A$. By \thref{ex1}, it is enough to show the following:
	
	\begin{itemize}
		\item [(i)]  $C_{t}=A_{t}$.
		\item [(ii)] $tC=C \cap tA, \text{ i.e., the natural map}\, \frac{C}{tC} \rightarrow \frac{A}{tA}$ is injective. 
	\end{itemize}
	
	\medskip
	\noindent
	(i) We have $C=R[F,G,H]=k[t,F,G,H]$. 
	Now $D$ will induce $D_{t}\in \LND(B_{t})$ such that $A_{t}=\ker(D_{t})$ and 
	$C_{t} \subseteq A_{t} \subset B_{t}=R_{t}[X,Y,Z]$ holds.

	We now show that $A_{t}=C_{t}$.
	Note that $C_{t}=k[t,t^{-1}][F,G]$ and $\td_{k}(C_{t})=\td_{k}(A_{t})=3$. Therefore, it is enough to show that $C_{t}$ is algebraically closed in $B_t$.
	  Now since $S^{-1}C$ is algebraically closed in $B_{1}$ and $S^{-1}C \cap B_t=C_t$, we are done.

	
	
	
	\smallskip
	\noindent 
	(ii) Let $L,M,N$ be algebraically independent elements over $R$. Consider the $R$-algebra epimorphism 
	$$\psi: R[L,M,N] \rightarrow C=R[F,G,H]$$ 
	such that 
	$\psi(L)=F, \psi(M)=G, \psi(N)=H$. 
	Now $(tN-M^2+4L^5) \subseteq \ker(\psi)$ and $(tN-M^2+4L^5)$ is a height 1 prime ideal of $R[L,M,N]$.
	Since $\td_{k} \frac{R[L,M,N]}{(tN-M^2+4L^5)} 
	=\td_{k}(C)$, we have $\ker(\psi)=(tN-M^2+4L^5)$. Therefore, 
	$$C \cong  \frac{R[L,M,N]}{(tN-M^2+4L^5)}=\frac{k[t,L,M,N]}{(tN-M^2+4L^5)}.$$ 
	Clearly $C$ is not regular and hence $C \neq R^{[2]}$.
	We now show that $C/tC \hookrightarrow A/tA$.
	Since $A$ is factorially closed subring of $B$, we have $A/tA \hookrightarrow B/tB$. Therefore, it is enough to show the map 
	$
	\psi_{1}: C/tC \rightarrow  B/tB
	$
	is injective. 
	Since $C/tC \cong \frac{k[L,M,N]}{(M^2-4L^5)}$, it is an integral domain and $\td_{k}(C/tC)=2$. Now that
	$$
	\psi_1(C/tC)=k[\psi_{1}{(F)},\psi_{1}{(G)}, \psi_{1}{(H)}]=k[X^2,X^5,X^9(Y-Z)].
	$$
	Hence $\td_{k}(\psi_1(C/tC))=2$. Thus, $\psi_{1}$ must be injective. 
	
	Thus $A=C \neq R^{[2]}$. \qed
%


}	

\end{ex}

 \section{Appendix}

 In this section we give an alternative proof of \thref{thm1} using \thref{li} proved below. The following result improvises \thref{prop1}.

 \begin{thm}\thlabel{li}
	Let $D$ be a homogeneous locally nilpotent derivation with respect to the standard weights $(1,1,1)$ on $B:=k[X,Y,Z]$ such that $rank(D) >1$. Then there exists a linear system of coordinates $\{L_{1}, L_{2}, L_{3}\}$ in $k[X,Y,Z]$ such that $$\deg_{D}(L_{1})< \deg_{D}(L_{2})< \deg_{D}(L_{3}).$$
\end{thm}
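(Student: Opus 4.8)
The plan is to reformulate the statement as a fact about a single linear-algebraic object: the restriction of the degree function $\mu = deg_{D}$ to the three-dimensional $k$-space $V = kX + kY + kZ$ of linear forms. Since $\mu$ is a degree function (so $\mu(fg) = \mu(f) + \mu(g)$ and $\mu(f+g) \le \max\{\mu(f),\mu(g)\}$), each sublevel set $V_{\le n} = \{L \in V \mid \mu(L) \le n\}$ is a $k$-subspace, and $\mu$ determines a flag $0 \subsetneq V_{\le e_1} \subseteq \cdots \subseteq V$. A system of linear variables with $\mu(L_1) < \mu(L_2) < \mu(L_3)$ exists if and only if $\mu$ attains exactly three distinct values on $V\setminus\{0\}$, i.e. $\dim V_{\le e_1}=1$, $\dim V_{\le e_2}=2$ at three distinct jump-levels; one then just chooses $L_i$ in the $i$-th step of the flag. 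So the theorem reduces to excluding a \emph{tie}, i.e. a repeated value of $\mu$ on $V$. As a normalization I note that $D(X_i)$ being a polynomial forces $d := deg(D) \ge -1$, and $d=-1$ makes $D$ a nonzero constant vector field of rank $1$; thus $rank(D)>1$ lets me assume $d \ge 0$.

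First I would dispose of a tie at value $0$. If two independent linear forms $L,L'$ satisfy $\mu(L)=\mu(L')=0$, then $L,L' \in ker(D)$; completing $\{L,L'\}$ to a linear system of variables gives $k[L,L'] \subseteq ker(D)$, so $rank(D)\le 1$, a contradiction. This removes every tie at the bottom of the flag when $e_1=0$ (and the all-equal case at value $0$, which would force $rank(D)=0$). Moreover the condition $\mu(L)=0$ is the linear condition $D(L)=0$ on $V$, so $\dim V_{\le 0}\le 1$ is insensitive to extending $k$ to $\bar k$.

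The heart of the argument handles a tie at a \emph{positive} value, where I would invoke Wang's Lemma (\thref{wa}). Arguing by contradiction, suppose the flag is incomplete; passing to $\bar k$ (which alters neither $\mu$ on linear forms nor the flag dimensions) and changing coordinates linearly, I may assume two variables $X_i,X_j$ share a common value $d_i=d_j=e>0$ (for a bottom tie $e=e_1$ is minimal; for a top tie $e=e_2$ sits over the line $V_{\le e_1}$, with $X_i,X_j$ independent modulo $V_{\le e_1}$). Wang's Lemma yields $\alpha \in k^{*}$ with $X_i^{e} - \alpha X_j^{e} \in ker(\phi) = \p$. Since $\p$ is prime and $X_i^{e}-\alpha X_j^{e} = \prod_{\zeta}(X_i - \zeta X_j)$ splits into linear forms over $\bar k$, some factor $L := X_i - \zeta X_j \in \p$; as $L$ is $\bar\mu$-homogeneous of $\bar\mu$-degree $e$, membership in $\p$ gives $\mu(L) < e$. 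In the bottom-tie case $L\neq 0$ has $\mu(L)<e=e_1$, contradicting minimality of $e_1$; in the top-tie case $L \in \langle X_i,X_j\rangle$ while $\mu(L)<e_2$ forces $L \in V_{\le e_1}$, yet $\langle X_i,X_j\rangle \cap V_{\le e_1}=0$, so $L=0$ — again a contradiction. Either way the tie is impossible and the flag is complete.

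The step I expect to be the main obstacle is verifying the hypothesis $ht(ker(\phi)) = n-1 = 2$ required to apply \thref{wa}; this is genuinely not automatic, since even a rank-two derivation with already distinct values can have $ht(ker(\phi))=1$ (e.g. $X\partial_Y+Y\partial_Z$, where $\phi(Y)^2=2\phi(X)\phi(Z)$ is the only relation). The task is to show that a positive tie forces the image $\phi(B)=k[\phi(X),\phi(Y),\phi(Z)]$ to have transcendence degree exactly one: the two tied leading forms, together with the relations coming from the homogeneous kernel generators (by \thref{zu}, $ker(D)=k[F,G]$ with $F,G$ standard-homogeneous, and $ker(D)=k^{[2]}$ by \thref{mi}), should impose enough algebraic dependence among the $\phi(X_i)$ to raise the height to $2$. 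Equivalently, one must control the associated graded derivation $gr_{\mu}(D)$, which is weighted-homogeneous of degree $-1$ for the weights $(d_1,d_2,d_3)$; pinning down this height computation is the technical crux on which the whole reduction rests.
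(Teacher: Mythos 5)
Your flag reformulation, the elimination of a tie at value $0$, and the descent from $\overline{k}$ to $k$ (writing a $\overline{k}$-linear form in a $k$-basis of the coefficient field, so that its $\mu$-value is the maximum of the $\mu$-values of its $k$-components — which also justifies your claim that the flag dimensions are insensitive to base change) are all sound, and in the all-values-equal case your use of Wang's lemma coincides with the paper's Case 1. But the proposal has a genuine gap exactly where you flag it yourself: the hypothesis $\operatorname{ht}(\ker\phi)=2$ of \thref{wa} is never verified in the two positive-tie cases with two distinct values, and this is not a technicality that the surrounding argument can absorb — it is the entire content of those cases. The paper can check $\operatorname{ht}(\p)=2$ only when $\mu(X)=\mu(Y)=\mu(Z)=n$, because then $\overline{\mu}=(n,n,n)$ is proportional to the standard grading, so the standard-homogeneous generators $F,G$ of $\ker(D)$ from \thref{zu} are automatically $\overline{\mu}$-homogeneous with $\mu(F)=\mu(G)=0<\overline{\mu}(F),\overline{\mu}(G)$, giving $(F,G)\subseteq \p$, hence $\operatorname{ht}(\p)\geq 2$, and non-maximality because no monomial lies in $\p$. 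In your bottom-tie and top-tie cases the weights $(d_1,d_2,d_3)$ are not all equal, $F$ and $G$ are no longer $\overline{\mu}$-homogeneous, and no comparable supply of elements of $\p$ is available; your own example $X\partial_Y+Y\partial_Z$ shows $\operatorname{ht}(\p)=1$ does occur, so something specific to a tie must be proved. Worse, since the tie cases are ultimately vacuous (that is what the theorem asserts), establishing ``positive tie $\Rightarrow \operatorname{ht}(\p)=2$'' is plausibly as hard as deriving the contradiction outright, and your proposal only gestures at it (``should impose enough algebraic dependence'').

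The paper avoids this entirely: Wang's lemma is used only in the all-equal case, and the two-value cases are killed by a direct block computation with the homogeneous kernel generators, which is the ingredient your plan is missing. If $\mu(X)=\mu(Y)<\mu(Z)$ and (after reducing away the subcase where some $\alpha X+\beta Y$ drops the degree) every nonzero form in $X,Y$ has the same $\mu$-value, write $F=f_n(X,Y)+f_{n-1}(X,Y)Z+\dots+f_0(X,Y)Z^n$; over $\overline{k}$ each $f_{n-i}$ splits into linear forms of the common value, so the blocks $f_{n-i}(X,Y)Z^{i}$ have pairwise distinct, strictly increasing $\mu$-values, whence $\mu(F)=0$ equals the value of a single block — forcing either $\mu(Z)=0$ (contradicting $D\neq 0$) or $\mu(X)=\mu(Y)=0$ (contradicting $rank(D)>1$). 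Symmetrically, if $\mu(X)<\mu(Y)=\mu(Z)$ with no combination of $Y,Z$ dropping, write $G=g_n(Y,Z)+g_{n-1}(Y,Z)X+\dots+g_0(Y,Z)X^n$; irreducibility gives $g_n\neq 0$, the blocks again have strictly ordered $\mu$-values, and $\mu(G)=n\mu(Y)>0$ contradicts $\mu(G)=0$. To complete your route you must either supply a proof that a positive tie forces $\operatorname{ht}(\ker\phi)=2$ — for which I see no argument easier than the paper's — or replace Wang's lemma in those two cases by a direct argument of this kind.
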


\begin{proof}
	We first assume that the field $k$ is algebraically closed.
	By \thref{zu},  $A~ (:=\ker(D))=k[F,G]$ with $\deg(F) \leqslant \deg(G)$, where $F,G$ are homogeneous polynomials with respect to the standard weights $(1,1,1)$. 
	If possible, suppose all the linear terms in $X,Y,Z$ have the same $\mu$-value. Therefore, 
	$$
	\mu(X)=\mu(Y)=\mu(Z)=n.
	$$ 
	Since $D \neq 0$, $n >0$. Since $D$ is homogeneous with respect to the weights $(1,1,1)$, it is also homogeneous with respect to the weights $\overline{\mu}=(n,n,n)$ (here $\overline{\mu}$ gives the same grading on $k[X,Y,Z]$ as defined in \eqref{m}).
	If $\phi$ is the map as in \eqref{phi}, then
	by \thref{wa}, we obtain that there exists 
	$\alpha \in k^{*}$ such that $X^n-\alpha Y^n \in   \ker(\phi)$. 
	Since $k=\overline{k}$ and $\ker(\phi) \in \Spec(B)$, there exists $c \in k^*$ such that $ X+cY \in \ker(\phi)$ and hence
	$$
	\mu(X+cY) < \overline{\mu}(X+cY)=n=\mu(X).
	$$ 
	This inequality contradicts our assumption, and hence all the linear terms can not have the same $\mu$-value.
	Therefore, there exists a linear system of coordinates 
	$\{V_{1}, V_{2}, V_{3}\}$ such that one of the following occurs:
	\begin{enumerate}
		\item[\rm(i)]  $\mu(V_{1})=\mu(V_{2})< \mu(V_{3})$
		\item[\rm(ii)] $\mu(V_{1})<\mu(V_{2})=\mu(V_{3})$
		\item[\rm(iii)] $\mu(V_{1})<\mu(V_{2})<\mu(V_{3})$
	\end{enumerate}
	Without loss of generality we rename $\{V_{1},V_{2},V_{3}\}$ by $\{X,Y,Z\}$.
	If (iii) happens, then we are done. Now we investigate (i) and (ii). 
	
	\smallskip
	\noindent
	Suppose (i) holds, i.e.,
	\begin{equation}\label{i}
		\mu(X)=\mu(Y)<\mu(Z).
	\end{equation} 
	We now show that there exist $\alpha, \beta \in k^{*}$, such that $\mu(\alpha X+ \beta Y)<\mu(X)$. 
	If possible, suppose for every  $\alpha, \beta \in k^{*}$
	$$
	\mu(\alpha X + \beta Y)= \mu(X)=\mu(Y)< \mu(Z) .
	$$
	Let 
	$$
	F=f_{m}(X,Y)+ f_{m-1}(X,Y)Z+\dots+f_{0}(X,Y)Z^m,
	$$ 
	such that for every integer $i$, $f_{i}(X,Y)$ is a homogeneous polynomial of degree $i$. Since $k$ is algebraically closed, every $f_{i}$ can be written as product of $i$ linear polynomials in $X,Y$. Since $\mu(X)=\mu(Y)<\mu(Z)$, we have
	$
	\mu(f_{m-i}(X,Y)Z^i)< \mu(f_{m-i-1}(X,Y)Z^{i+1}).
	$ 
	Therefore, there exists $j$ such that 
	$
	\mu(f_{m-j}(X,Y)Z^j)=\mu(F)=0.
	$
	If $j \geqslant 1$, then $\mu(Z)=0$ and by \eqref{i}, $\mu(X)=\mu(Y)=0$,
	which contradicts that $D \neq 0$.
	If $j=0$, then $\mu(X)=\mu(Y)=0$ which contradicts $rank(D)>1$. Therefore there exist $\alpha, \beta \in k^{*}$ such that $\mu(\alpha X + \beta Y) < \mu(X)=\mu(Y)$ and hence we are reduced to (iii). 
	
	\smallskip
	\noindent
	Now suppose (ii) holds, i.e., 
	\begin{equation}\label{ii}
		\mu(X)<\mu(Y)=\mu(Z). 
	\end{equation}
	We now show there exist $\alpha, \beta \in k^{*}$ such that 
	$\mu(\alpha Y+ \beta Z)<\mu(Y)=\mu(Z)$. 
	If possible, suppose  
	$$
	\mu(\alpha Y+ \beta Z)=\mu(Y)=\mu(Z),
	$$
	for all $\alpha, \beta \in k^{*}$.
	Since $rank(D)>1$, we must have $\deg(G) \geqslant 2$. Let 
	$$
	G= g_{n}(Y,Z)+g_{n-1}(Y,Z)X+\dots+g_{0}(Y,Z)X^n,
	$$
	where $g_{i}$ is a homogeneous polynomial of degree $i$, for every $0 \leqslant i \leqslant n$.
	Now we have $\mu(g_{i}(Y,Z)X^{n-i})> \mu(g_{i-1}(Y,Z)X^{n-i+1})$ and since $G$ is irreducible, $g_{n}(Y,Z) \neq 0$. 
	Therefore, $\mu(G)=n\mu(Y)=0$. But then by \eqref{ii}, $\mu(X)=\mu(Y)=\mu(Z)=0$, which contradicts that $D \neq 0$. Hence there exist $\alpha, \beta \in k^{*}$ such that $\mu(\alpha Y+ \beta Z)<\mu(Y)=\mu(Z)$.
	Now if $\mu(X)$, $\mu(\alpha Y+ \beta Z)$, $\mu(Y)$ are distinct, then we are reduced to (iii) and 
	if $\mu(X)= \mu(\alpha Y+ \beta Z)< \mu(Y),$ then we are reduced to (i) and hence we are done.
	
	\medskip
	
	We now assume that  $k$ is not algebraically closed and $\overline{k}$ be an algebraic closure of $k$. Then $D$ extends to $\overline{D}=D \otimes_{k} \overline{k} \in \LND(\overline{k}[X,Y,Z])$.\\ Since $D$ is homogeneous, so is $\overline{D}$. Therefore, there exist $L_{1}, L_{2}, L_{3} \in \overline{k}[X,Y,Z]$ linear in $X,Y,Z$ such that 
	$$
	\deg_{\overline{D}}(L_{1})< \deg_{\overline{D}}(L_{2})< \deg_{\overline{D}}(L_{3}).  
	$$
	Let $L_i= \alpha_iX+\beta_iY+\gamma_iZ$ for some $\alpha_i, \beta_i,\gamma_i \in \overline{k}$ and $1 \leqslant i \leqslant 3$.
	Let $k^{\prime}=k(\alpha_i, \beta_i, \gamma_i \mid 1 \leqslant i \leqslant 3)$, and let $\{b_{1}, b_{2}, \dots, b_{r}\}$ be a $k$-basis for $k^{\prime}$. 
	Therefore, we can write
	$$
	L_{i}= \sum_{j=1}^{r} b_{j} V_{ij},
	$$ 
	where 
	$i=1,2,3$ and $V_{ij} \in k[X,Y,Z]$ are linear in $X,Y,Z$. 
	Since $b_{j}$'s are linearly independent over $k$, we must have some $V_{ij_{i}}$ such that 
	$ 
	\mu(V_{ij_{i}})= \deg_{\overline{D}}(L_{i}), 
	$ 
	for every $i$.
	Therefore we have, 
	$$
	\mu(V_{1j_{1}})< \mu(V_{2j_{2}})< \mu(V_{3j_{3}}) 
	$$
	Since $\mu$-values of $V_{1j_{1}},V_{2j_{2}},V_{3j_{3}}$ are distinct, $\{V_{1j_{1}},V_{2j_{2}},V_{3j_{3}}\}$ is an algebraically independent set of linear terms in $k[X,Y,Z]$. Therefore, this is a system of coordinates in $k[X,Y,Z]$ and hence we are done.
\end{proof}

 We now note an easy lemma as follows.
 \begin{lem}\thlabel{irr}
 	Let $B$ be an affine $k$-algebra which is a UFD and $D$ a non-trivial locally nilpotent derivation on $B$. 
 	Let $\overline{k}$ be an algebraic closure of 
 	$k$, $\overline{B}:=B \otimes_{k} \overline{k}$ an integral domain and $\overline{D}$ 
 	be the natural extension of $D$ on $\overline{B}$.
 	If $D$ is irreducible then so is $\overline{D}$.
 \end{lem}
 
 \begin{proof}
 	
 	Let $J=(DB)$ and $\overline{J}=(\overline{D} (\overline{B}))$. Clearly, $\overline{J}=J \overline{B}$. Suppose, if possible, $D$ is irreducible but $\overline{D}$ is not. 
 	Then there exist $\overline{b} \in \overline{B}$ and a prime 
 	ideal $\p$ of $\overline{B}$ such that $\hgt \p =1$ and 
 	$\overline{J} \subseteq (\overline{b}) \subseteq \p$; and 
 	hence $J \subseteq \p \cap B= {\p}_1$. Since $B \subseteq \overline{B}$ is a flat extension, it satisfies the going down property (cf. \cite[5.D, Theorem 4]{mat}), and hence $\hgt {\p}_1=1$. Now, since $B$ is a UFD, ${\p}_1$ is principal, which contradicts that $D$ is irreducible. Hence the result follows. 
 \end{proof}
 
 Before proceeding further, we note the following.
 Let $D$ be an irreducible homogeneous lnd of degree $d$ on $k[U,V,W]$. Let $A= \ker(D)=k[F,G],$ where $F, G$ are irreducible homogeneous polynomials of degree $p$ and $q$ respectively. By \thref{dai}, $D=\Delta_{(F,G)}$ and hence it follows that $d=p+q-3$. Such an lnd $D$ is called an irreducible homogeneous lnd of type-$(p,q)$. 
 The following result has been proved in \cite[Theorem 4]{GF} over any algebraically closed field of characteristic zero, and as a consequence \thref{thm1} follows (\cite[Corollary 2]{GF}). We now give an alternative proof below over an arbitrary field of characteristic zero, using \thref{li}.

 \begin{thm}\thlabel{ra}
 	Let $B=k[U,V,W]$. Then there is no irreducible homogeneous locally nilpotent derivation on $B$ of type-$(2,d+1)$ for $d=1,2,3$, and there is no homogeneous locally nilpotent derivation of type-$(3,3)$.
 \end{thm}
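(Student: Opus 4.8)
The plan is to argue by contradiction, combining \thref{li} with two elementary constraints on a linear coordinate system: a numerical--semigroup condition coming from factorial closedness of $ker(D)$, and (for the borderline cases) a degree identity for the $deg_{D}$--values of the variables. First I would reduce to $k=\overline{k}$ as in the proof of \thref{li}. Since each type in question has $p,q\geqslant 2$, the semigroup $\langle p,q\rangle$ contains no $1$, so the homogeneous ring $ker(D)=k[F,G]$ has no nonzero element of degree $1$; hence no linear form lies in $ker(D)$, whence $rank(D)\geqslant 2$ and \thref{li} furnishes a linear coordinate system $\{L_{1},L_{2},L_{3}\}$ with $m_{1}:=deg_{D}(L_{1})<m_{2}<m_{3}$ and each $m_{i}\geqslant 1$. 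The crucial remark is that $D^{m_{i}}(L_{i})$ is a nonzero homogeneous element of $ker(D)=k[F,G]$ of (standard) degree $1+m_{i}d$; as every homogeneous element of $k[F,G]$ has degree in $\langle p,q\rangle$, this gives
$$ 1+m_{i}d\in\langle p,q\rangle\qquad(i=1,2,3). $$

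This settles type $(3,3)$ at once: there $d=3$ and $\langle 3,3\rangle=3\mathbb{N}$, whereas $1+3m_{i}\equiv 1\pmod 3$ never lies in $3\mathbb{N}$ --- a contradiction delivered by any single linear form of positive $deg_{D}$. For the remaining types one has $p=2$, so $F$ is an irreducible homogeneous quadric, and I would extract structure from it. Using that $deg_{D}$ is additive, with $deg_{D}(L_{1}^{a}L_{2}^{b}L_{3}^{c})=am_{1}+bm_{2}+cm_{3}$, and that $deg_{D}(F)=0$ while every monomial of $F$ has positive $deg_{D}$, the $\overline{\mu}$--leading form of $F$ (for the weights $\overline{\mu}=(m_{1},m_{2},m_{3})$) cannot be a single monomial and must consist of at least two monomials of equal maximal weight. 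Ordering the six quadratic weights $2m_{1}<m_{1}+m_{2}<\{2m_{2},\,m_{1}+m_{3}\}<m_{2}+m_{3}<2m_{3}$ and discarding the single--monomial possibilities (which would force $F$ reducible), the only surviving option is $m_{1}+m_{3}=2m_{2}$ with leading form $c_{1}L_{1}L_{3}+c_{2}L_{2}^{2}$ ($c_{1}c_{2}\neq 0$); thus $F=L_{1}\cdot(\text{linear})+c_{2}L_{2}^{2}$. (The same relation can be read off from \thref{wa}.) For type $(2,2)$ this already finishes: $G$ is then a quadric of the same shape with nonzero $L_{2}^{2}$--coefficient, so a suitable $k$--combination of $F$ and $G$ kills the $L_{2}^{2}$--term and produces a \emph{reducible} quadric $L_{1}\cdot M\in ker(D)$; since $ker(D)$ is factorially closed, the linear form $L_{1}$ would lie in $k[F,G]$, which is impossible.

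The hard part is types $(2,3)$ and $(2,4)$, and I expect the main obstacle to be the degree identity
$$ deg_{D}(L_{1})+deg_{D}(L_{2})+deg_{D}(L_{3})=d+3=deg(F)+deg(G) $$
for the coordinate system produced by \thref{li}. Granting it, the case analysis closes: three distinct positive integers sum to at least $6$, so $(2,2)$ ($d+3=4$) and $(2,3)$ ($d+3=5$) are immediately excluded, while $(2,4)$ ($d+3=6$) forces $(m_{1},m_{2},m_{3})=(1,2,3)$; but $\langle 2,4\rangle$ consists of the even integers and $d=3$, so $1+3m_{i}$ even forces each $m_{i}$ odd, contradicting $m_{2}=2$. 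Proving the identity is the delicate point. I would establish it by passing to the associated graded derivation for the $\overline{\mu}$--filtration: one checks (this is the technical core) that $D$ respects that filtration, so by \thref{gd} it induces a locally nilpotent derivation that is homogeneous of degree $d$ and homogeneous with respect to its own weights $(m_{1},m_{2},m_{3})$, with $\widehat{F}=c_{1}L_{1}L_{3}+c_{2}L_{2}^{2}$ and $\widehat{G}$ in its kernel; for such a ``self--graded'' derivation the sum $\sum_{i}m_{i}$ is pinned down by a direct computation. Alternatively the identity can be read off the automorphism $\exp(tD)$, whose Jacobian is $1$ (locally nilpotent derivations have zero divergence): its top $t$--coefficients $\tfrac{1}{m_{i}!}D^{m_{i}}(L_{i})$ lie in the two--dimensional ring $ker(D)$ and are therefore algebraically dependent, and comparing their standard degrees against $deg(F),deg(G)$ yields $\sum_{i}m_{i}=deg(F)+deg(G)$. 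This degree bookkeeping is, I expect, the only genuine difficulty; the rest is the elementary analysis above.
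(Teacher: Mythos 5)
Your reductions and the semigroup observation are correct and, in places, genuinely better than the paper: $D^{m_i}(L_i)$ is a nonzero homogeneous kernel element of standard degree $1+m_i d$, so $1+m_i d\in\langle p,q\rangle$, and this kills type $(3,3)$ at once ($1+3m_i\not\equiv 0 \bmod 3$) --- simpler than the paper's route via a homogeneous local slice and the Slice Theorem. Your leading-form analysis forcing $2m_2=m_1+m_3$ and $F=L_1\cdot(\text{linear})+c_2L_2^2$ is sound (the single-monomial case dies because a unique top-weight monomial would force $\mu(F)=\overline{\mu}(F)>0$, while $\mu(F)=0$), and your $d=1$ argument --- cancel the $L_2^2$-terms of $F$ and $G$ and invoke factorial closedness --- is essentially the paper's, which writes $G-\lambda F=X(c_1Y+c_3Z+c_5X)$.

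The proof collapses, however, exactly where you locate the difficulty: the identity $m_1+m_2+m_3=\deg F+\deg G=d+3$ is false, and neither of your proposed derivations can be repaired. Neither sketch uses the rank-$3$ hypothesis, so both would apply verbatim to the irreducible rank-$2$ derivations of \thref{ntr}, whose linear system has ordered $\deg_D$-values $(0,p,pq)$ with sum $p+pq$, whereas $\deg F+\deg G=1+pq$; by contrast the triangularizable derivations of \thref{tr} of the same type $(1,pq)$ have values $(0,1,pq)$ with sum $1+pq$ --- so the sum is not even an invariant of the type. Concretely, the $\exp(tD)$-Jacobian argument yields nothing: the coefficient of $t^{m_1+m_2+m_3}$ in $\det\bigl(\partial(\exp(tD)L_i)/\partial L_j\bigr)$ is the Jacobian determinant of the three kernel elements $\frac{1}{m_i!}D^{m_i}(L_i)$, and this vanishes identically because any three elements of the two-dimensional ring $k[F,G]$ are algebraically dependent; its vanishing is automatic and imposes no constraint on $\sum_i m_i$. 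The associated-graded route is never actually carried out, and the same \thref{ntr} examples are ``self-graded'' in your sense yet violate the identity. Since the semigroup condition is vacuous for type $(2,3)$ (every $1+2m_i\geqslant 3$ lies in $\langle 2,3\rangle$) and for type $(2,4)$ gives only that each $m_i$ is odd (so, e.g., $(1,3,5)$ survives), the cases $d=2,3$ are genuinely unproven. The paper closes them by a different mechanism with no counterpart in your proposal: after eliminating the high-weight terms of $G$, it reduces modulo $F$ to show $\overline{X}\in \ker(\tilde{D})$, whence $DX\in(X)$ and $DX=0$ for $d=2$; and for $d=3$ it deduces $DX=\mu'F^2$, applies the Slice Theorem to get $F^nZ=P(F,G,X)$, and reaches a parity contradiction at $X=0$, where the right side has even degree but $Y^{2n}Z$ has odd degree.
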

 
 \begin{proof}
 	Let $D$ be an irreducible homogeneous lnd on $B$ and $\overline{k}$ be an algebraic closure of $k$. Then $D$ extends to a homogeneous lnd  $\overline{D}=D\otimes_{k}\overline{k}$ on $B \otimes_{k} \overline{k}= \overline{k}[U,V,W]$. 
 	Note that $\deg(D)=\deg(\overline{D})$, and
 	since $D$ is irreducible, by \thref{irr}, $\overline{D}$ is so. 
 	If $\ker(D)=k[F,G]$, then $\ker(\overline{D})=\overline{k}[F,G]=\overline{k}^{[2]}$. By \thref{dai}, $\overline{D}= \Delta_{(F,G)}$. Therefore, $D$ and $\overline{D}$ have the same type. So without loss of generality we may assume that $k=\overline{k}$ and $D=\overline{D}$.
 	
 	Let $D$ be an lnd of type-$(2,d+1)$ for $1 \leqslant d \leqslant 3$, i.e., $\deg(F)=2$ and $\deg(G)=d+1$. 
 	Note that $rank(D)=3$ as $d \geqslant 1$. 
 	By \thref{li}, there exists a system of coordinates $\{X,Y,Z\}$ linear in $U,V,W$ such that 
 	\begin{equation}\label{dx}
 	\deg_{D}(X)< \deg_{D}(Y)< \deg_{D}(Z).
 \end{equation}
 	Suppose $F=aX^2+bY^2+cZ^2+eXY+fYZ+gZX$ where $a,b,c,e,f,g \in k$.
 	
 	If $c\neq 0$, then $\deg_{D}(F)=\deg_{D}(Z^2)=0$. That means $Z \in \ker(D)$. By \eqref{dx}, this contradicts that $D \neq 0$. Therefore, $c=0$.
 	
 	If $f\neq 0$, then $\deg_{D}(F)=\deg_{D}(YZ)=0$ and hence $Y,Z\in \ker(D)$, which would again contradict that $D \neq 0$. Therefore, $f=0$. Hence we get 
 	$
 	F= bY^2+X(aX+eY+gZ) ,
 	$
 	where $b \neq 0$ and $g \neq 0$, as $F$ is an irreducible polynomial of degree $2$. Now with respect to the following new system of linear coordinates
 	$$
 	U_{1}=X,~~ U_{2}=\sqrt{b}Y, ~~U_{3}= aX+eY+gZ,
 	$$
 	we have $F=(U_{2}^2+U_{1}U_{3})$ and $\deg_{D}(U_{1})< \deg_{D}(U_{2}) < \deg_{D}(U_{3})$. 
 	Renaming $\{U_{1},U_{2},U_{3}\}$ as $\{X,Y,Z\}$ we write 
 	$
 	F=Y^2+XZ
 	$
 	where  
 	\begin{equation}\label{2}
 		\deg_{D}(X)< \deg_{D}(Y) < \deg_{D}(Z).
 	\end{equation}

 	\smallskip
 	
 	\noindent
 	{ \it \textbf{Case}  $\mathbf{d=1}$}:
 	Now $G$ is a degree $2$ homogeneous polynomial. Suppose 
 	$$
 	G=\lambda F+ Y(c_{1}X+ c_{2}Z) + c_{3}ZX+ c_{4}Z^2+ c_{5}X^2,
 	$$
 	where $\lambda, c_{i} \in k$ for $1 \leqslant i \leqslant 5$.
 	Since we have $\deg_{D}(X)< \deg_{D}(Y) < \deg_{D}(Z)$ and $rank(D)=3$, we have $c_{2}=c_{4}=0$, as otherwise $Z \in \ker(D)$. Therefore,
 	\begin{equation}\label{3}
 		G=\lambda F+X(c_{1}Y+c_{3}Z+c_{5}X).
 	\end{equation}
 	Since $G-\lambda F \in \ker(D)$, we see that $X \in \ker(D)$, which contradicts that $rank(D)=3$. 
 	
 	\smallskip
 	
 	\noindent
 	{\it \textbf{Case}}  $\mathbf{d=2}$:
 	Now $G$ is a degree 3 homogeneous polynomial. Suppose
 	$$
 	G= F(a_{1}X+a_{2}Y+a_{3}Z) + Y(b_{1}X^2+b_{2}XZ+b_{3}Z^2)+ d_{1}X^3+d_{2}X^2Z+d_{3}XZ^2+d_{4}Z^3,
 	$$
 	where $a_{i},b_{i},d_{j} \in k$ for $1 \leqslant i \leqslant 3$ and $ 1 \leqslant j \leqslant 4$. 
 	If $d_{4} \neq 0$, then by (\ref{2}), $\deg_{D}(G)= \deg_{D}(Z^3)=0$, and hence $Z \in \ker(D)$. This would contradict that $D \neq 0$. Therefore, $d_{4}=0$. If $b_{3} \neq 0$, then $\deg_{D}(G)=\deg_{D}(YZ^2)=0$, and hence $Y,Z \in \ker(D)$. This is again a contradiction.
 	 Therefore, $b_{3}=0$.
 	
 	Since $G$ is irreducible, $\overline{G} \neq 0$ in $\overline{B}=\frac{k[X,Y,Z]}{(F)}$. Let $\tilde{D}= D\, (\mod F)$ and hence $\tilde{D} \in \LND(\overline{B})$. Now in $\overline{B}$ we have the following equality
 	$$
 	\overline{G}= \overline{X} (b_{1}\overline{XY}+b_{2}\overline{YZ}+d_{1}\overline{X}^2+d_{2}\overline{XZ}+d_{3}\overline{Z}^2).
 	$$
 	Therefore, $\overline{X} \in \ker(\tilde{D})$ and hence $ DX=F(\alpha_{1}X+\alpha_{2}Y+\alpha_{3}Z)$ for some $(\alpha_{1},\alpha_{2},\alpha_{3}) \in k^3 \setminus \{(0,0,0)\}$, as $D$ is homogeneous lnd of degree 2. By (\ref{2}), we have $\alpha_{2}=\alpha_{3}=0$. But then $DX \subseteq (X)$. It would imply that $\deg_D(DX) \geqslant \deg_D(X)$, which is a contradiction.
 	
 	\smallskip
 	
 	\noindent
 	{\it \textbf{Case}} $\mathbf{d=3}$:
 	Now $G$ is a degree 4 homogeneous polynomial. Suppose
 	$$
 	G=\mu F^2+Fg^{\prime}(X,Y,Z)+Xg^{\prime \prime}(X,Y,Z)+e_{0}YZ^3+e_{1}Z^4,
 	$$
 	where $e_{0},e_{1} \in k$ and $g^{\prime}(X,Y,Z) , g^{\prime \prime}(X,Y,Z)$ are homogeneous polynomials of degree $2$ and $3$ respectively and linear in $Y$.
 	By similar arguments as in case $d=2$, we have $e_0=e_1=0$, otherwise $Z \in \ker(D)$ which contradicts that $D \neq 0$ (by \eqref{2}).
 	Now in $\overline{B}$ we have $\overline{G}= \overline{X} g^{\prime \prime}(\overline{X}, \overline{Y}, \overline{Z})$ and hence $\overline{X} \in \ker(\tilde{D})$. Since $D$ is homogeneous lnd of degree $3$, we have 
 	\begin{equation}\label{4}
 		DX= \mu^{\prime} F^2+ F(a^{\prime}X^2+b^{\prime}XY+c^{\prime}XZ+d^{\prime}YZ+e^{\prime}Z^2)
 	\end{equation} 
 	Using (\ref{2}), we have $d^{\prime}, e^{\prime}=0$. Now if $(a^{\prime}X^2+b^{\prime}XY+c^{\prime}XZ) \neq 0$, then $$
 	\deg_{D}(DX)=\deg_{D}(a^{\prime}X^2+b^{\prime}XY+c^{\prime}XZ) \geqslant \deg_{D}(X)
 	$$
 	which is not possible. Therefore, $a^{\prime}=b^{\prime}=c^{\prime}=0$ and hence $DX=\mu^{\prime}F^2$.
 	
 	By \thref{localslice}, we have 
 	$$
 	k[F,G]_{DX}[X]=k[X,Y,Z]_{DX}
 	$$
 	and hence
 	$$
 	k[F,F^{-1},G,X]=k[X,Y,Z,F^{-1}].
 	$$
 	Hence 
 	\begin{equation}\label{fgen}
 		F^nZ=P(F,G,X)
 	\end{equation} for some polynomial $P \in k^{[3]}$ and $n \in \mathbb{N} \cup \{0\}$. Evaluating \eqref{fgen} at $X=0,$ we get 
 	\begin{equation}\label{fgen1}
 		Y^{2n}Z=P(F(0,Y,Z), G(0,Y,Z),0).
 	\end{equation} 
 	But in \eqref{fgen1}, the R.H.S is of even degree where L.H.S is of odd degree. Hence we get a contradiction.
 	
 	Hence we get that $D$ can not be of type-$(2,d+1)$ for $d=1,2,3$.
 	
 	\smallskip
 	Now suppose $D$ be an lnd of type-$(3,3)$. That means $A=\ker(D)=k[F,G]$ where $F$, $G$ both are homogeneous polynomials of degree $3$ and $d=3$. If $H$ is a homogeneous local slice of degree $l$, then $DH$ is a homogeneous element of $A$ of degree $l+3$. As $\ker(D)$ is generated by $F$ and $G$, which are homogeneous of degree 3, we have $l+3=3s$ for some positive integer $s$. Therefore $l=3(s-1)$. But then by \thref{localslice}, we have
 	\begin{equation}\label{F}
 	k[F,G]_{DH}[H]=k[U,V,W]_{DH}.
 	\end{equation}
 	By \eqref{F}, any linear polynomial $L$ in $k[U,V,W]$ can be written as
 	$$
 	L= \frac{p(F,G,H)}{(DH)^n}
 	$$
 	for some $p \in k^{[3]}$ and an integer $n \geqslant 0$.
 	Therefore, 
 	\begin{equation}\label{L}
 		(DH)^n L= p(F,G,H).
 	\end{equation}
 	Since $F,G,H,DH$ all are homogeneous polynomials of degrees divisible by $3$, every monomial in the R.H.S of \eqref{L} has degree divisible by $3$. Whereas, the L.H.S contains monomials of degree $3q+1$ for some integer $q \geqslant 0$, as $L$ is linear. Therefore, we get a contradiction and hence
 	 there is no homogeneous lnd of type-$(3,3)$ on $k[U,V,W]$.
 \end{proof}
 
 The above theorem implies \thref{thm1}, and the details can be found in \cite[Corollary 2]{GF}. However, for the sake of completeness we are giving the proof below. 
 \begin{cor}\thlabel{cra}
 	Let $D$ be a homogeneous locally nilpotent derivation on $k^{[3]}$ of degree not more than $3$.  Then $rank(D)<3$.
 \end{cor}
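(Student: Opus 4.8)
The plan is to derive the corollary directly from Theorem \thref{ra} by a short numerical argument. First I would apply Zurkowski's theorem (\thref{zu}) to write $ker(D)=k[F,G]$ with $F,G$ homogeneous, and set $p:=deg(F)$, $q:=deg(G)$; after interchanging $F$ and $G$ if necessary I may assume $p\leqslant q$. This exhibits $D$ as a homogeneous LND of type-$(p,q)$, so the degree identity $d=p+q-3$ recorded just before \thref{ra} applies. Since $d=deg(D)\leqslant 3$, I obtain the numerical constraint $p+q\leqslant 6$.

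I would then argue by contradiction, assuming $rank(D)=3$. The crucial point is that $rank(D)=3$ forces $p\geqslant 2$: if $p=1$ then $F$ is a nonzero linear form, hence extends to a linear system of coordinates, so $F$ is a coordinate lying in $ker(D)$ and therefore $rank(D)\leqslant 2$, a contradiction. Thus $2\leqslant p\leqslant q$, and combined with $p+q\leqslant 6$ the only surviving possibilities are
$$
(p,q)\in\{(2,2),\,(2,3),\,(2,4),\,(3,3)\}.
$$

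Finally I would match these against \thref{ra}: the triples $(2,2),(2,3),(2,4)$ are precisely type-$(2,d+1)$ for $d=1,2,3$, while $(3,3)$ is type-$(3,3)$, and \thref{ra} rules out every one of them. This contradicts $rank(D)=3$, so $rank(D)<3$, as claimed.

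Essentially all the difficulty is absorbed into \thref{ra}, which is already established; for the corollary itself the only steps needing genuine care are the exhaustiveness of the enumeration and the rank-drop observation that a degree-one generator of $ker(D)$ produces a coordinate in the kernel. I would also remark that the bound $d\leqslant 3$ covers the degenerate small-degree cases for free: if $p+q\leqslant 3$ then $\min(p,q)=1$, and the same linear-form argument yields $rank(D)<3$ immediately, so no separate discussion of $d\leqslant 0$ is required.
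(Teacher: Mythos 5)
Your proposal is correct and follows essentially the same route as the paper: both reduce the corollary to Theorem~\thref{ra} via the identity $d=p+q-3$, enumerating the possible types and observing that any remaining type has a linear (hence coordinate) kernel generator, forcing $rank(D)<3$. The only difference is cosmetic — you organize the case analysis by the pairs $(p,q)$ with a contradiction argument, while the paper organizes it by the degree $d$ — and your closing remark on the cases $d\leqslant 0$ is, if anything, slightly more careful than the paper's.
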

 \begin{proof}
 	By \thref{dai}, we see that $D=a \Delta_{(F,G)}$, where $\ker(D)=k[F,G]$ and $a \in \ker(D)$. Since $D$ is homogeneous, we have $\Delta_{(F,G)}$ is also homogeneous such that $\deg(\Delta_{(F,G)}) \leqslant \deg(D)$ and $rank(\Delta_{(F,G)})=rank(D)$. Therefore, it is enough to show the result for irreducible homogeneous lnds. Let $d$ be the degree of $D$. Suppose $D$ is of type-$(p,q)$. Then $d=p+q-3$.
 	
 	If $d=0$, then $D$ must be of type-$(1,2)$ and in that case $rank(D)<3$.
 	
 	If $d=1$ or $d=2$, then $D$ can not be of type-$(2,d+1)$ by \thref{ra}. Therefore, it must be of type-$(1,d+2)$ and hence $rank(D)<3$.
 	
 	If $d=3$, again by \thref{ra}, $D$ can not be of type-$(2,4)$ and $(3,3)$. Hence it must be of type-$(1,5)$.
 	
 	Therefore we see for every $d \leqslant 3,$ $\ker(D)$ must contain a linear term and hence a coordinate in $k^{[3]}$. That is $rank(D)<3$. 
 \end{proof}

\section*{Acknowledgement}
The author is thankful to Neena Gupta and Animesh Lahiri for and carefully going through the earlier draft and suggesting improvements.

\end{document}